\newcommand\qbin[3]{\left[\begin{matrix} #1 \\ #2 \end{matrix} \right]_{#3}}
\theoremstyle{plain} 
\newtheorem{Theorem}{Theorem} 
\newtheorem{Definition}[Theorem]{Definition}
\newtheorem{Lemma}[Theorem]{Lemma}
\newtheorem{Proposition}[Theorem]{Proposition}
\newtheorem{Corollary}[Theorem]{Corollary}
\theoremstyle{definition}
\newtheorem{Example}[Theorem]{Example}
\theoremstyle{remark} 
\newenvironment{example}
  {\pushQED{\qed}\examplex}
  {\popQED\endexamplex}
\newtheorem{Remark}[Theorem]{Remark}
 \newcommand{\FF}{\mathbb{F}}
\DeclareMathOperator{\rk}{rk}
\DeclareMathOperator{\Rsupp}{Rsupp}
\title{The incidence matrix of a $q$-ary graph}
\author[1]{Michela Ceria}
\affil[1]{\small 
Dept. of Mechanics, Mathematics \& Management, Politecnico di Bari,  Via Orabona 4 - 70125 Bari - Italy; michela.ceria@gmail.com}
\author[2]{Relinde Jurrius}
\affil[2]{\small Faculty of Military Sciences, Netherlands Defence Academy, The Netherlands; rpmj.jurrius@mindef.nl}
\date{}
\begin{document}

\maketitle
\begin{abstract}
  In this preprint we discuss a definition of a $q$-ary graph. Furthermore, we describe how to make an incidence matrix for it, with an eye on the corresponding $q$-matroid.  
\end{abstract}

\section{Introduction}

Ever since the re-discovery of $q$-matroids it has been an open question what the corresponding $q$-analogue of a graph is. The idea that vertices are $1$-dimensional spaces and edges are $2$-dimensional spaces, is relatively straightforward, but does not answer how to relate this to $q$-matroids. This paper will also not directly answer this question, but we will do this via the detour of the incidence matrix of a $q$-ary graph. An important role in our proposed definition is inspired by the notion of a $k$-regular $q$-ary graph, as defined in \cite{braun2024q} in order to study the $q$-analogue of a strongly regular graphs. We will define the incidence matrix of a $q$-ary graph with $q$-matroids in mind. \\

This manuscript is meant as a draft, an therefore lacks several preliminaries and references. We refer the reader to \cite{oxley2011matroid} for more information on matroids and specifically to Chapter 5 for matroids coming from graphs. For completeness we will shortly recall $q$-matroids and their representation.

\begin{Definition}\label{qmat}
A $q$-matroid is a pair $(E,r)$ with $E$ a finite dimensional vector space and $r:\{\text{subspaces of }E\}\to\mathbb{N}_0$ a function, the \emph{rank function}, with for all $A,B\subseteq E$:
\begin{itemize}
\item[(r1)] $0\leq r(A)\leq \dim A$
\item[(r2)] If $A\subseteq B$ then $r(A)\leq r(B)$.
\item[(r3)] $r(A  + B)+r(A\cap B)\leq r(A)+r(B)$ (semimodular)
\end{itemize}
\end{Definition}

An important class of matroids are the representable matroids.

\begin{Definition}
Let $E=\mathbb{F}_q^n$ and $G$ be a matrix over $\mathbb{F}_{q^m}$. Let $A\subseteq E$ and $Y$ a matrix whose row space is $A$. Then $r(A)=\rk(GY^\mathsf{T})$ is the rank function of a $q$-matroid: we call $G$ a \emph{representation} of the $q$-matroid.
\end{Definition}

A representation of a $q$-matroid is not unique: we will go in details later. Geometrically speaking, representable matroids are in bijection with $q$-systems, the $q$-analogue of projective systems. See \cite{randrianarisoa2020geometric,alfarano2022linear}.

\section{Defining a $q$-ary graph}

We propose the following definition of a $q$-ary graph. It is inspired by \cite{braun2024q}, where $k$-regular $q$-ary graphs are defined.

\begin{Definition}\label{def:q-graph2}
The vertices of our $q$-ary graph are the subspaces of dimension $1$ of $\FF_q^v$. The edges are some subspaces of dimension $2$. Furthermore, the edges have to satisfy the following: for every vertex $X$ and all edges adjacent to it (this is the neighbourhood of this vertex), it holds that the union of all these edges is equal to a subspace of $\FF_q^v$. Calling the degree of this subspace $d+1$, we say that $X$ has degree $\delta(X):=d$.
\end{Definition}

To avoid degenerate situations, we ask that the vertices of degree at least one do not all lie in a proper subspace of $\mathbb{F}^v$. (In that case, we take a smaller space over which we define the $q$-ary graph.) Note that we are ambiguous with the 1-dimensional subspaces of $\mathbb{F}^v$ that have degree 0, i.e., are not adjacent to any edge. These are isolated vertices that we might include if we want all vertices to form a subspace, but in practice we can just ignore them. (As in the classical case, where adding isolated vertices does not really give anything interesting.)

This definition is in fact equivalent to the following, that introduces coordinates and hence will be more useful in our proofs.

\begin{Definition}
Let $V=\mathbb{F}_q^v$ and let $E$ be a set of $2$-dimensional subspaces of $V$, the \emph{edges}. Then $(V,E)$ is a \emph{$q$-ary graph} if for all $c_1,c_2\in\mathbb{F}_q$ the \emph{$q$-graph property} holds: If $\langle\mathbf{x},\mathbf{y}_1\rangle$ and $\langle\mathbf{x},\mathbf{y}_2\rangle$ are (adjacent) edges, then $\langle\mathbf{x},c_1\mathbf{y}_1+c_2\mathbf{y}_2\rangle$ is also an edge.
\end{Definition}

There are a few elementary lemmas in graph theory that now have a nice $q$-analogue. First, we can relate the degrees and number of edges of a $q$-ary graph. This is a straightforward $q$-analogue of the classical result that the sum of the degrees of all vertices of a graph is equal to twice the number of edges.

\begin{Lemma}\label{DegEdg}
The following holds for the vertices and edges in a $q$-ary graph.
\[ \sum_{v\in V} \qbin{\delta(v)}{1}{q}=(q+1)\cdot \#\text{edges}. \]
\end{Lemma}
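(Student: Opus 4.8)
The plan is to prove the identity by double-counting the incident pairs $(v,e)$, where $v$ is a vertex (a $1$-dimensional subspace) contained in an edge $e$ (a $2$-dimensional subspace). The heart of the argument is the preliminary claim that a vertex of degree $d$ lies on exactly $\qbin{d}{1}{q}$ edges; once this is established, the identity follows from a routine comparison of two counts.

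To prove the preliminary claim I would fix a vertex $X=\langle\mathbf{x}\rangle$ of degree $d$ and work in the quotient space $\mathbb{F}_q^v/X$. Any edge through $X$ has the form $\langle\mathbf{x},\mathbf{y}\rangle$ with $\mathbf{y}\notin X$, and it is determined by the nonzero image $\bar{\mathbf{y}}$ of $\mathbf{y}$ in the quotient up to a scalar; thus edges through $X$ correspond to $1$-dimensional subspaces of $\mathbb{F}_q^v/X$. The $q$-graph property is precisely what guarantees that the set of such directions $\bar{\mathbf{y}}$, together with $0$, is closed under $\mathbb{F}_q$-linear combinations, and therefore forms a subspace $W$ of the quotient. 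Since the union of the edges through $X$ is its $(d+1)$-dimensional neighbourhood $U$, we have $W=U/X$, so $\dim W=d$ and every one of the $\qbin{d}{1}{q}$ one-dimensional subspaces of $W$ is the direction of an edge. Hence $X$ lies on exactly $\qbin{\delta(X)}{1}{q}$ edges.

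With the claim in hand, let $I$ be the number of incident vertex--edge pairs. Summing over vertices gives $I=\sum_{v\in V}\qbin{\delta(v)}{1}{q}$, where isolated vertices are harmless because $\qbin{0}{1}{q}=0$. Summing instead over edges, each edge is a $2$-dimensional subspace and so contains exactly $\qbin{2}{1}{q}=q+1$ vertices, whence $I=(q+1)\cdot\#\text{edges}$. Equating the two expressions for $I$ gives the stated formula.

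The two incidence counts are routine; the step that carries the real content is the preliminary claim. The point I would be most careful about is that the $q$-graph property does more than confirm that the neighbourhood $U$ is a subspace: it forces \emph{every} $2$-dimensional subspace of $U$ containing $X$ to be an edge, which is exactly what makes the per-vertex count equal to $\qbin{\delta(X)}{1}{q}$ rather than some smaller number. This closure is the only nontrivial input; the rest is the Gaussian-binomial bookkeeping of subspaces.
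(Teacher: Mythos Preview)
The paper states this lemma without proof, so there is nothing to compare against; your double-counting argument is correct and is the natural proof. Your key observation---that the $q$-graph property forces the edges through a vertex $X$ to correspond bijectively to the $1$-dimensional subspaces of the $d$-dimensional quotient $U/X$, so that $X$ lies on exactly $\qbin{\delta(X)}{1}{q}$ edges---is exactly what is needed, and the rest is the standard handshake count with $\qbin{2}{1}{q}=q+1$ vertices per edge.
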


We now introduce some elementary definitions.

\begin{Definition}\label{walkcyc}
A \emph{walk} in a $q$-graph is a sequence $[v_0,e_1,v_1,\ldots,v_{\ell-1},e_\ell,v_\ell]$ of vertices $v_i$ and edges $e_i$ such that every edge $e_i$ is incident with the vertices $v_{i-1}$ and $v_i$. If {\em all vertices and all edges} are {\em distinct}, we call this walk a \emph{path}. If all edges are distinct and $v_0=v_\ell$, it is called a \emph{cycle}.
\end{Definition}

Note that for graphs, we ask for a path only for all vertices to be distinct, and this implies all edges are distinct. For $q$-graphs, this implication does not hold. If $v_0,v_1,v_2$ are vertices on the same edge $e$, we do not want $[v_0,e,v_1,e,v_2]$ to be a path. Because if it was, the next part of the definition makes $[v_0,e,v_1,e,v_2,e,v_0]$ a cycle.

\begin{Definition}\label{connected}
A $q$-ary graph is \emph{connected} if there is a path between every two vertices.
\end{Definition}

\begin{Definition}\label{subgraph}
A \emph{subgraph} of the $q$-ary graph $G$ is a $q$-ary graph whose edges are a subset of the set of edges of $G$ and that satisfies the $q$-graph property.
\end{Definition}

\begin{Definition}\label{ForestTree}
A subgraph that does not contain a cycle is called a \emph{forest}. If it is connected, it is called a \emph{tree}.
\end{Definition}

The next result is a $q$-analogue of the classical result that the number of vertices and edges in a tree differ by 1.

\begin{Lemma}\label{edgevert}
The following holds for a $q$-ary tree.
\[ \#\text{edges}\cdot q +1 = \#\text{vertices}. \]
\end{Lemma}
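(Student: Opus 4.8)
The plan is to mimic the classical proof by rooting the tree and building a parent-edge map, but to exploit the fact that in the $q$-ary setting every edge---being a $2$-dimensional subspace of $\FF_q^v$---carries exactly $q+1$ vertices rather than $2$. First I would fix a root vertex $v_0$ and use connectivity (Definition \ref{connected}) together with acyclicity (Definition \ref{ForestTree}) to guarantee that between $v_0$ and any other vertex there is a \emph{unique} path. Then I would define a map $\phi\colon V\setminus\{v_0\}\to E$ sending a vertex $v$ to its \emph{parent edge}, namely the last edge $e_\ell$ on the unique path $[v_0,e_1,\ldots,e_\ell,v]$ from $v_0$ to $v$. With this set up, the whole statement reduces to showing that $\phi$ is well defined and that every edge is the parent of exactly $q$ vertices, since then $\#\text{vertices}-1=\sum_{e\in E}\#\phi^{-1}(e)=q\cdot\#\text{edges}$. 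This is precisely the $q$-analogue of the classical bookkeeping ``each edge contributes one new vertex,'' with the $2$ replaced by $q+1$.

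To analyse the fibre of a fixed edge $e$, I would single out the vertex $w_e\in e$ that is closest to $v_0$. The core observation is that any two distinct vertices $u,u'$ of $e$ are joined by the one-step walk $[u,e,u']$, and acyclicity forces this to be \emph{the} path between them: a second path avoiding $e$ would close up with $[u,e,u']$ into a cycle (Definition \ref{walkcyc}). Consequently all $q+1$ vertices of $e$ are pairwise at distance $1$, the closest vertex $w_e$ is uniquely determined, and for each of the remaining $q$ vertices $u$ the concatenation of the path from $v_0$ to $w_e$ with $[w_e,e,u]$ is simple and hence is the unique path to $u$. Thus $e$ is the parent edge of exactly those $q$ vertices, whereas $w_e$ receives its parent edge one step earlier (or is $v_0$ itself). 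Summing $\#\phi^{-1}(e)=q$ over all edges yields the claim.

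The step I expect to be most delicate is the rigorous justification that each edge is the parent of \emph{exactly} $q$ vertices, resting on two facts that deserve care in the $q$-ary framework. The first is the basic lemma that a connected acyclic $q$-ary graph has unique paths: here one must remember that traversing an edge means choosing an ordered pair among its $q+1$ vertices, so the usual symmetric-difference argument has to be rechecked against the walk conventions of Definition \ref{walkcyc}. The second is the purely numerical input that a $2$-dimensional $\FF_q$-space has precisely $q+1$ one-dimensional subspaces, which is exactly what turns the classical ``$+1$'' into ``$\cdot\, q$.'' It is worth noting that the $q$-graph property itself is not needed beyond guaranteeing that the objects are well defined; the argument uses only connectivity, acyclicity, and the vertex count of an edge. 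As sanity checks, a single edge ($\#\text{edges}=1$) has one root and $q$ children, giving $\#\text{vertices}=q+1$, and a star at a vertex of degree $d$ returns $\qbin{d+1}{1}{q}$ vertices against $\qbin{d}{1}{q}$ edges, consistent with $q\cdot\qbin{d}{1}{q}+1=\qbin{d+1}{1}{q}$.
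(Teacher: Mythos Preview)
Your argument is correct but takes a different route from the paper. The paper gives a two-line induction on the number of edges: a single edge carries $q+1$ vertices, and each further edge added to a tree must meet the existing graph in exactly one vertex (otherwise one loses connectivity or creates a cycle), so it contributes $(q+1)-1=q$ new vertices. You instead root the tree, define the parent-edge map $\phi\colon V\setminus\{v_0\}\to E$, and show every fibre has size exactly $q$. Both approaches hinge on the same numerical input---that a $2$-dimensional $\FF_q$-space has $q+1$ one-dimensional subspaces---but the paper's induction is shorter and sidesteps the need to establish uniqueness of paths, whereas your bijective argument is more structural and pinpoints \emph{which} $q$ vertices each edge accounts for. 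Your observation that the $q$-graph property plays no role beyond well-definedness is apt and matches the spirit of the paper's proof, which likewise uses only connectivity, acyclicity, and the vertex count of an edge.
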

\begin{proof}
We do this by inductively adding edges to our tree. With one edge, the formula clearly holds, because there are $q+1$ vertices on one edge. When adding an extra edge, it will intersect the existing graph in exactly one vertex, otherwise it will not be connected or will create a cycle. So, there is one edge and $(q+1)-1$ vertices added to the graph, and the formula still holds.
\end{proof}

We make some examples of $q$-ary graphs.

\begin{example}\label{RoofTriang}
We consider the $q$-analogues of some very small graphs: the path $P_2$ (two edges) %U22
and the triangle $C_3$ (three edges). %U23
Because the neighbourhood of a vertex needs to be a space, we find that the $q$-analogue of $P_2$ is in $\mathbb{F}_q^3$ and consists of all edges (2-dimensional spaces) through one vertex (1-dimensional space). Since this is a tree, Lemma \ref{edgevert} holds: the number of vertices is equal to 1 (the common vertex) plus $q$ other vertices for each edge. To see that Lemma \ref{DegEdg} holds, note that there is one vertex of degree 2 and all other vertices have degree 1. \\
The $q$-analogue of $C_3$ has as edges all 2-dimensional spaces of $\mathbb{F}_q^3$. It has $\frac{q^3-1}{q-1}$ vertices that are all of degree 2, and $\frac{q^3-1}{q-1}$ edges. Since $\qbin{2}{1}{q}=q+1$, we see Lemma \ref{DegEdg} holds.
\end{example}

\begin{example}\label{P4C4}
We make two examples of trees in $\mathbb{F}_2^5$, that are the $q$-analogues of $P_4$, the path of length $4$, and $S_4$, the star with 4 rays. For $P_4$ we have the following vertices:
\[ \langle e_1,e_2\rangle, \langle e_2,e_3\rangle, \langle e_3,e_4\rangle, \langle e_4,e_5\rangle. \]
The vertices $e_2$, $e_3$ and $e_4$ now need another edge in order to make this a proper $q$-ary graph. This gives
\[ \langle e_2,e_1+e_3\rangle, \langle e_3,e_2+e_4\rangle, \langle e_4,e_3+e_5\rangle. \]
Together, they for a $q$-ary graph. Most vertices have degree $1$, except for $e_2$, $e_3$ and $e_4$ that have degree 2.
For $S_4$ we do something similar. The center of the star is $e_1$, then we get the following edges.
\[ \langle e_1,e_2\rangle, \langle e_1,e_3\rangle, \langle e_1,e_4\rangle, \langle e_1,e_5\rangle.
\]
We now need to add all other 2-dimensional spaces through $e_1$ as an edge. So in total there are 15 edges. All vertices in this graph have degree 1, except for $e_1$ that has degree $4$.
Since both graphs are trees, they satisfy Lemma \ref{edgevert}. For $P_4$, we get that $7\cdot 2+1=15$. Not all 1-dimensional spaces of 
$\mathbb{F}_2^5$ are vertices of the graph. For $S_4$ we see that $15\cdot 2+1=31$, which are all 1-dimensional subspaces of $\mathbb{F}_2^5$. 

Both these trees are maximal in $\mathbb{F}_2^5$, in the sense that they are not subgraphs of trees with a larger number of edges. However, they do not have the same number of vertices and edges.
\end{example}

We point out the next result about how to interpret the $q$-ary graph property in a geometric way.

\begin{Lemma}\label{dDim}
Consider all edges through a point $X$ of degree $d$. Then for each edge we can find a basis $\langle\mathbf{x},\mathbf{y}_i\rangle$, such that $X=\langle\mathbf{x}\rangle$ and the $\langle\mathbf{y}_i\rangle$ are exactly all $1$-dimensional subspaces of a certain $d$-dimensional space.
\end{Lemma}
\begin{proof}
From the definition we know that $X$ and all vertices adjacent to it are exactly all $1$-dimensional subspaces of a $(d+1)$-dimension space, the neighbourhood of $X$. Take a $d$-dimensional subspace $D$ inside this neighbourhood that does not contain $X$. Now there is a bijection between $1$-dimensional subspaces of $D$ and the edges through $X$. Indeed, every edge through $X$ intersects $D$ in a $1$-dimensional space and every edge intersects $D$ in a different $1$-dimensional subspace. Moreover, every $1$-dimensional subspace of $D$, together with $X$, spans an edge through $X$ by the $q$-ary graph property. Denoting a $1$-dimensional space of $D$ by $\langle\mathbf{y}_i\rangle$, we conclude that each edge can be written in the form $\langle\mathbf{x},\mathbf{y}_i\rangle$ with $\mathbf{y}_i\in D$.
\end{proof}

\section{The incidence matrix}

Our goal is to make an incidence matrix for a 
$q$-ary graph. In order to motivate this definition, we first have a closer look on how to make an incidence matrix of a graph, and write this process in a way that leads to the $q$-analogue we are looking for.

An incidence matrix of a graph has rows indexed by the vertices and columns indexed by the edges. An entry is $1$ if the vertex corresponding to the row and edge corresponding to the column are incident, otherwise it is zero. Every edge thus gets ``represented'' by a column, which is a vector in $\mathbb{F}_2$. This vector has Hamming weight 2, and its support corresponds to the set of vertices the edge is incident with.

From a matroid point of view, we could have also chosen to make the incidence matrix over $\mathbb{F}_3$. In that case, one choses an arbitrary direction for every edge. For the representation of an edge (i.e., column of the incidence matrix) this results in a $-1$ and $+1$ on the positions indexed by the initial and final vertex of the edge. Again, we have a vector of Hamming weight 2, and its support corresponds to the set of vertices the edge is incidence with. Furthermore, in order to make sure that every column has exactly one $+1$ and one $-1$, we ask that the column is orthogonal to the all-one vector: this is a vector of full Hamming weight.

This process can be extended to other fields. It is well-known that matroids that are representable over $\mathbb{F}_2$ and $\mathbb{F}_3$ are representable over any field. This representation can be found in a similar matter as before: make an incidence matrix of the graph where every edge corresponds to a column of the incidence matrix. This column vector needs to have Hamming weight 2, its support should correspond to the set of vertices the edge is incident with, and the vector needs to be orthogonal to the all-one vector. This is the formulation that inspires the $q$-analogue.

The incidence matrix of a $q$-ary graph in $\mathbb{F}_q^v$ will be a matrix over the field extension $\mathbb{F}_{q^m}$. Again every edge gets represented by a column. This is going to be a vector in $\mathbb{F}_{q^m}^v$. The vector has rank weight 2 and its support is the subspace corresponding to the edge. Furthermore, the vector is orthogonal to a fixed full-weight vector $\mathbf{u}$. Since all these entries need to be linearly independent as elements of $\mathbb{F}_{q^m}/\mathbb{F}_q$, we need that $m\geq v$. In fact, we take $m=v$ from now on. In our examples, we will often take $\mathbf{u}=[1,\alpha,\alpha^2,\ldots,\alpha^{m-1}]^\mathsf{T}$, where $\alpha$ is a primitive element of the field extension.

Before continuing to the incidence matrix, we focus on how the columns are determined.

\begin{Definition}
Let $A$ be an edge of a $q$-ary graph in $\mathbb{F}_q^m$ and let $\mathbf{u}$ be a full-weight vector in $\mathbb{F}_{q^m}^m$. A \emph{representation with respect to $\mathbf{u}$} for $A$ is a vector $\mathbf{v}\in\mathbb{F}_{q^m}^m$ satisfying $\Rsupp(\mathbf{v})=A$ and $\mathbf{v}\cdot\mathbf{u}=0$.
\end{Definition}

\begin{Remark}\label{AlgebraicallyIndependent}
Note that $\mathbb{F}_q^m$ and $\mathbb{F}_{q^m}$ are isomorphic as vector spaces over $\mathbb{F}_q$. This isomorphism depends on the choice of a basis of the field extension $\mathbb{F}_{q^m}/\mathbb{F}_q$, and the entries of the full-weight vector $\mathbf{u}$ can be taken as this basis. Then  $\mathbf{u}$ plays an important role in this isomorphism: given $\mathbf{x}\in\mathbb{F}_q^m$, the isomorphism is given by $\mathbf{x}\cdot\mathbf{u}$. In particular, this implies that vectors $\mathbf{x}_1,\ldots,\mathbf{x}_s$ are linearly independent over $\mathbb{F}_q$ if and only if $(\mathbf{u}\cdot\mathbf{x}_1),\ldots,(\mathbf{u}\cdot\mathbf{x}_s)$ are linearly independent over $\mathbb{F}_q$. We will use this fact extensively.
\end{Remark}

If it is clear from the context, we will omit the dependence on $\mathbf{u}$ when talking about representations. There are options for representations. In fact, if $\mathbf{v}$ is a representation, then $\alpha\mathbf{v}$ is as well. We prove some elementary properties of representations.

% This would be a nice place for an example.

\begin{Lemma}
If $\mathbf{v}_1$ and $\mathbf{v}_2$ are representations of the same edge $A$, then for all $\alpha_1,\alpha_2\in\mathbb{F}_{q^m}$ such that $\alpha_1\mathbf{v}_1+\alpha_2\mathbf{v}_2\neq\mathbf{0}$ we have that $\alpha_1\mathbf{v}_1+\alpha_2\mathbf{v}_2$ is a representation of $A$ as well.
\end{Lemma}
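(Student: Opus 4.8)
The plan is to verify the two clauses of Definition \ref{IncidMatrPrecise} separately for the vector $\mathbf{w}:=\alpha_1\mathbf{v}_1+\alpha_2\mathbf{v}_2$: that it is orthogonal to $\mathbf{u}$, and that its rank-support equals $A$ (so in particular $\rk(\mathbf{w})=2$). The orthogonality is immediate and costs nothing: the standard pairing with the fixed vector $\mathbf{u}$ is $\mathbb{F}_{q^m}$-linear, and since $\mathbf{v}_1\cdot\mathbf{u}=\mathbf{v}_2\cdot\mathbf{u}=0$ we get $\mathbf{w}\cdot\mathbf{u}=\alpha_1(\mathbf{v}_1\cdot\mathbf{u})+\alpha_2(\mathbf{v}_2\cdot\mathbf{u})=0$. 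All the real content is in pinning down the rank-support, which I would split into an upper bound $\Rsupp(\mathbf{w})\subseteq A$ and a lower bound $\rk(\mathbf{w})\geq 2$.

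For the upper bound I would invoke two properties of the rank-support. First, it is invariant under multiplication by a nonzero element of $\mathbb{F}_{q^m}$: expanding the entries of a vector over $\mathbb{F}_q$ into the columns of an $m\times v$ matrix, multiplication by $a\in\mathbb{F}_{q^m}^\ast$ corresponds to left multiplication by the invertible $\mathbb{F}_q$-matrix of ``multiply by $a$'', i.e.\ to invertible row operations, which leave the row space — the rank-support — unchanged. This is exactly the step that lets the coefficients range over all of $\mathbb{F}_{q^m}$ rather than only $\mathbb{F}_q$. Second, the rank-support is subadditive, $\Rsupp(\mathbf{x}+\mathbf{y})\subseteq\Rsupp(\mathbf{x})+\Rsupp(\mathbf{y})$. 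Combining these yields $\Rsupp(\mathbf{w})\subseteq\Rsupp(\alpha_1\mathbf{v}_1)+\Rsupp(\alpha_2\mathbf{v}_2)=\Rsupp(\mathbf{v}_1)+\Rsupp(\mathbf{v}_2)=A+A=A$, so $\rk(\mathbf{w})\leq\dim A=2$.

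The lower bound is where the special role of $\mathbf{u}$ enters, and I expect this to be the main obstacle. I would first establish the auxiliary fact that no nonzero vector of rank weight at most $1$ can be orthogonal to $\mathbf{u}$. Indeed, a rank-$1$ vector is of the form $c\,\mathbf{z}$ with $c\in\mathbb{F}_{q^m}$ and $\mathbf{z}\in\mathbb{F}_q^v$, so orthogonality to $\mathbf{u}$ forces $c\sum_{i=1}^{v}z_i\alpha^{i-1}=0$; since $m=v$ and $\alpha$ generates the extension, the entries $1,\alpha,\ldots,\alpha^{m-1}$ of $\mathbf{u}$ are $\mathbb{F}_q$-linearly independent, so the inner sum vanishes only when $\mathbf{z}=\mathbf{0}$, i.e.\ the vector is zero. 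This is precisely the ``full rank weight'' property of $\mathbf{u}$ built into Definition \ref{IncidMatrPrecise}. Applying it to $\mathbf{w}$, which is nonzero and orthogonal to $\mathbf{u}$, gives $\rk(\mathbf{w})\geq 2$.

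Putting the bounds together gives $\rk(\mathbf{w})=2$ and $\Rsupp(\mathbf{w})\subseteq A$ with $\dim A=2$, hence $\Rsupp(\mathbf{w})=A$; together with the orthogonality this shows $\mathbf{w}$ meets every clause of Definition \ref{IncidMatrPrecise} and is a representation of $A$. The only delicate points to write out in full are the scalar-invariance of the rank-support (the matrix-of-multiplication argument) and the $\mathbb{F}_q$-linear-independence input to the auxiliary fact; everything else is bookkeeping.
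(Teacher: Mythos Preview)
Your proposal is correct and follows essentially the same approach as the paper: orthogonality by linearity, the containment $\Rsupp(\mathbf{w})\subseteq A$ via scalar-invariance and subadditivity of the rank-support, and the exclusion of rank weight $\leq 1$ using the $\mathbb{F}_q$-linear independence of the entries of $\mathbf{u}$. If anything, your write-up is slightly more self-contained: the paper cites \cite{jurrius2017defining} for the rank-support properties and implicitly reduces to the case $\mathbf{v}_1+\mathbf{v}_2$, whereas you spell out the matrix-of-multiplication argument and treat the general combination directly.
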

\begin{proof}
First of all, note that $\alpha_1\mathbf{v}_1+\alpha_2\mathbf{v}_2$ is orthogonal to $\mathbf{u}$. Also, multiplication by $\alpha$ does not change the rank support of a vector (see \cite[Proposition 2.3]{jurrius2017defining}). We have left to show that $\Rsupp(\mathbf{v}_1+\mathbf{v}_2)=A$. By the same paper, we have that $\Rsupp(\mathbf{v}_1+\mathbf{v}_2)\subseteq\Rsupp(\mathbf{v}_1)+\Rsupp(\mathbf{v}_2)=A+A=A$. Now $\Rsupp(\mathbf{v}_1+\mathbf{v}_2)$ cannot have dimension 0 unless $\mathbf{v}_1=-\mathbf{v}_2$. It can also not have dimension $1$, because if $\mathbf{v}_1+\mathbf{v}_2$ has rank weight 1, there would be a nonzero vector over $\mathbb{F}_q$ that is orthogonal to $\mathbf{u}$, and this can not happen because the entries of $\mathbf{u}$ are elements of $\mathbb{F}_{q^m}$ that are linearly independent over $\mathbb{F}_q$. We conclude that $\Rsupp(\mathbf{v}_1+\mathbf{v}_2)$ has dimension $2$ and is thus equal to $\Rsupp(\mathbf{v}_1)+\Rsupp(\mathbf{v}_2)$.
\end{proof}

\begin{Proposition}\label{UniqueUpToAlpha}
A representation of an edge is unique up to multiplication by $\alpha\in\mathbb{F}_{q^m}^*$.
\end{Proposition}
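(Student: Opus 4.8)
The plan is to reduce the statement to a dimension count over $\FF_{q^m}$. Fix an $\FF_q$-basis $\mathbf{a}_1,\mathbf{a}_2$ of the edge $A$ (a $2$-dimensional subspace of $\FF_q^m$), and consider the set
\[ W=\{\mathbf{v}\in\FF_{q^m}^m : \Rsupp(\mathbf{v})\subseteq A\}. \]
First I would show that $W$ is exactly the $\FF_{q^m}$-span of $\mathbf{a}_1$ and $\mathbf{a}_2$ inside $\FF_{q^m}^m$, and that it is $2$-dimensional over $\FF_{q^m}$. Both inclusions follow by expanding a vector over a fixed $\FF_q$-basis of $\FF_{q^m}$: the rows of the resulting matrix lie in $A$ exactly when the vector is an $\FF_{q^m}$-combination of $\mathbf{a}_1,\mathbf{a}_2$. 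That $\mathbf{a}_1,\mathbf{a}_2$ remain linearly independent over $\FF_{q^m}$ (not just over $\FF_q$) is immediate, since a relation $\beta_1\mathbf{a}_1+\beta_2\mathbf{a}_2=\mathbf{0}$ over $\FF_{q^m}$ would force the ratio $\beta_2/\beta_1$ into $\FF_q$ by comparing a single nonzero coordinate, contradicting $\FF_q$-independence.

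Next I would observe that every representation of $A$ lies in $W\cap\mathbf{u}^\perp$: by definition it has rank support $A\subseteq A$, so it lies in $W$, and it is orthogonal to $\mathbf{u}$. The key point is then that $W\cap\mathbf{u}^\perp$ is a \emph{proper} subspace of $W$, hence of $\FF_{q^m}$-dimension at most $1$. Indeed $\mathbf{a}_1\in W$, but $\mathbf{a}_1\cdot\mathbf{u}=\sum_j (a_1)_j\alpha^{j-1}\neq 0$, because the coordinates $(a_1)_j\in\FF_q$ are not all zero while $1,\alpha,\ldots,\alpha^{m-1}$ are linearly independent over $\FF_q$ (as $\alpha$ is primitive). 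Since $W\cap\mathbf{u}^\perp$ contains the nonzero representation $\mathbf{v}_1$, it is exactly a line. Two representations $\mathbf{v}_1,\mathbf{v}_2$ therefore both lie on this line, so $\mathbf{v}_2=\lambda\mathbf{v}_1$ for some $\lambda\in\FF_{q^m}^*$; as $\alpha$ is primitive, $\lambda$ is a power of $\alpha$, which is the claim. (Equivalently, one may invoke the preceding lemma: if $\mathbf{v}_1,\mathbf{v}_2$ were independent they would span the $2$-dimensional space $W$, forcing $\mathbf{a}_1\in W$ to be orthogonal to $\mathbf{u}$, a contradiction.)

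I expect the one genuinely substantive step to be the structural description of $W$ in the first paragraph, namely the equivalence between $\Rsupp(\mathbf{v})\subseteq A$ and membership in the $\FF_{q^m}$-span of an $\FF_q$-basis of $A$. This is where the rank-metric machinery of \cite{jurrius2017defining} enters, and where one must check that the description is basis-independent. Everything after that is linear algebra over $\FF_{q^m}$ together with the single field-theoretic input that $1,\alpha,\ldots,\alpha^{m-1}$ are $\FF_q$-linearly independent — the same input used in the preceding lemma to rule out rank weight $1$.
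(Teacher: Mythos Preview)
Your argument is correct and is essentially the paper's proof recast in subspace language: the paper parametrises a representation as $\alpha_1\mathbf{a}_1+\alpha_2\mathbf{a}_2$ (implicitly your space $W$), imposes orthogonality to $\mathbf{u}$ as a single linear equation in $(\alpha_1,\alpha_2)$, and then counts that this leaves exactly $q^m-1$ solutions forming one $\FF_{q^m}^*$-orbit. Both proofs rest on the same field-theoretic input, namely that no nonzero $\FF_q$-vector is orthogonal to $\mathbf{u}$; your version simply makes the dimension count $\dim W=2$, $\dim(W\cap\mathbf{u}^\perp)=1$ explicit rather than phrasing it as a free-parameter count.
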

\begin{proof}
The representation of an edge has rank weight 2, that means that there are two non-zero entries $\alpha_1$ and $\alpha_2$ such that the other entries are linear combinations of those over $\mathbb{F}_q$. Moreover, the representation has to be orthogonal to $\mathbf{u}$. So in order to find a representation, we have to find non-zero $\alpha_1$ and $\alpha_2$ that satisfy a linear equation over $\mathbb{F}_{q^m}$. This can be solved by taking any element of $\mathbb{F}_{q^m}^*$ as $\alpha_1$, and then $\alpha_2$ is fixed. So we have $q^m-1$ possibilities for a representation of a given edge. Since we have seen that $\Rsupp(\mathbf{v})=\Rsupp(\alpha\mathbf{v})$, it follows that these representations are $\alpha$-multiples of each other.
\end{proof}

Recall that we want to make an incidence matrix where every column is the representation of an edge of the $q$-ary graph. But we cannot take any representation we like for each edge: we want to take into account the $q$-ary graph property somehow. This leads us to the following definition.

\begin{Definition} \label{def:compatib}
Take a connected subset of edges of a $q$-ary graph, and consider their representations with respect to a fixed full-weight vector $\mathbf{u}$. We call this set of representations a \emph{compatible set of representations} if the following holds. For any representations $\mathbf{v}_1,\ldots,\mathbf{v}_s$ of edges adjacent to the same vertex, we can find vectors $\mathbf{x},\mathbf{y}_1,\ldots,\mathbf{y}_s$ such that 
\begin{itemize}
\item the common vertex is $\langle\mathbf{x}\rangle$;
\item $\mathbf{v}_i$ is a representation of the edge $\langle\mathbf{x},\mathbf{y}_i \rangle$;
\item for any $\lambda_1,\ldots,\lambda_s\in\mathbb{F}_q$ and the vector $\mathbf{v}:=\lambda_1\mathbf{v}_1+\lambda_2\mathbf{v}_2+\cdots+\lambda_s\mathbf{v}_s$ one of the following holds:
\begin{itemize}
\item $\mathbf{v}=\mathbf{0}$ and $\sum_{i=1}^s\lambda_i{\bf y}_i=\mathbf{0}$, or
\item $\mathbf{v}\neq\mathbf{0}$ and $\sum_{i=1}^s\lambda_i{\bf y}_i\neq\mathbf{0}$, moreover, $\mathbf{v}$ is a representation of the edge $\langle {\bf x}, \sum_{i=1}^s\lambda_i{\bf y}_i\rangle$.
\end{itemize}
\end{itemize}
\end{Definition}

In the last point, the space $\langle {\bf x}, \sum_{i=1}^s\lambda_i{\bf y}_i\rangle$ is always an edge because of the $q$-ary graph property. It can either be an edge of which we already have a representation in our set, or not. The last point in this definition reflects the $q$-ary graph property.

We can slightly extend our definition to edges that are not connected. Remember that if we have a graph of $c$ connected components, we can always make it into a connected graph by adding $c-1$ edges.

\begin{Definition}
Take a subset of edges of a $q$-ary graph, and consider their representations. We call this set of representations a \emph{compatible set of representations} if all connected components have compatible representations and moreover for any minimal set of edges needed to make a connected set, we can find representations such that the total of representations form a compatible set.
\end{Definition}

The following is direct from the definitions.

\begin{Lemma}
Any subset of a set of compatible representations is again a set of compatible representations.
\end{Lemma}
\begin{proof}
The statement follows taking the $\lambda_i$'s corresponding to the vectors that are not in the subspace as equal to zero. 
\end{proof}

In a set of compatible representations, we can multiply a representation by a scalar in $\mathbb{F}_q^*$ and the set stays compatible. However, multiplication by a scalar in $\mathbb{F}_{q^m}\backslash\mathbb{F}_q$ does not result in another compatible set of representations.

\begin{Lemma}\label{MultBySmallScalar}
Suppose $\mathbf{v}_1,\ldots,\mathbf{v}_s$ is a compatible set of representations of edges adjacent to the same vertex. Then for any $\mu_1,\ldots,\mu_s\in\mathbb{F}_q^*$ the set $\mu_1\mathbf{v}_1,\ldots,\mu_s\mathbf{v}_s$ is again a compatible set of representations.
\end{Lemma}
\begin{proof}
We will show that the choice of basis vectors $\mathbf{x},\mu_1\mathbf{y}_1,\ldots,\mu_s\mathbf{y}_s$ satisfies the properties given in Definition \ref{def:compatib}. For $i=1,\ldots,s$ we have that 
\[ \Rsupp(\mu_i\mathbf{v}_i)=\Rsupp(\mathbf{v}_i)=\langle\mathbf{x},\mathbf{y}_i\rangle=\langle\mathbf{x},\mu_i\mathbf{y}_i\rangle \]
and $\mu_i\mathbf{v}_i\cdot\mathbf{u}=0$, so $\mu_i\mathbf{v}_i$ and $\mathbf{v}_i$ are a representation of the same edge.

Now since $\mathbf{v}_1,\ldots,\mathbf{v}_s$ is a compatible representation, we have for any $\lambda_1,\ldots,\lambda_s\in\mathbb{F}_q$ that $(\lambda_1\mu_1)\mathbf{v}_1+\cdots+(\lambda_s\mu_s)\mathbf{v}_s=\mathbf{0}$ if and only if $(\lambda_1\mu_1)\mathbf{y}_1+\cdots+(\lambda_s\mu_s)\mathbf{y}_s=\mathbf{0}$, hence $\lambda_1(\mu_1\mathbf{v}_1)+\cdots+\lambda_s(\mu_s\mathbf{v}_s)=\mathbf{0}$ if and only if $\lambda_1(\mu_1\mathbf{y}_1)+\cdots+\lambda_s(\mu_s\mathbf{y}_s)=\mathbf{0}$. We have left to show that if the vector $\lambda_1(\mu_1\mathbf{v}_1)+\cdots+\lambda_s(\mu_s\mathbf{v}_s)$ is nonzero, it is a representation of the edge $\langle\mathbf{x},\lambda_1(\mu_1\mathbf{y}_1)+\cdots+\lambda_s(\mu_s\mathbf{y}_s)\rangle$. This follows again directly from the fact that $\mathbf{v}_1,\ldots,\mathbf{v}_s$ is a compatible representation: then if the vector $(\lambda_1\mu_1)\mathbf{v}_1+\cdots+(\lambda_s\mu_s)\mathbf{v}_s$ is nonzero, is a representation of the edge $\langle\mathbf{x},(\lambda_1\mu_1)\mathbf{y}_1+\cdots+(\lambda_s\mu_s)\mathbf{y}_s\rangle$, which is the same statement. We conclude that $\mu_1\mathbf{v}_1,\ldots,\mu_s\mathbf{v}_s$ is a compatible set of representations.
\end{proof}

This proof does not hold if we take $\mu_1,\ldots,\mu_s\in\mathbb{F}_{q^m}^*$, since we cannot multiply the vectors $\mathbf{y}_i$ by an element of $\mathbb{F}_{q^m}^*$. The next example shows that the statement is in fact false for $\mu_1,\ldots,\mu_s\in\mathbb{F}_{q^m}^*$.

\begin{Example}
Consider the $q$-analogue of $P_2$, as introduced in Example \ref{RoofTriang}, over $\mathbb{F}_2$. This $q$-ary graph has $3$ edges, all adjacent to the same vertex.
Let $\alpha$ be a primitive element of $\mathbb{F}_8$ with $\alpha^3+\alpha+1=0$ and take $\mathbf{u}=\left[ \begin{array}{ccc} 1 & \alpha & \alpha^2 \end{array}\right]^\mathsf{T}$. We claim the following set of representations is a compatible one:
\[ \mathbf{v}_1=\left[\begin{array}{c} \alpha \\ 1 \\ 0 \end{array}\right],\quad \mathbf{v}_2=\left[\begin{array}{c} 0 \\ \alpha^2 \\ \alpha \end{array}\right],\quad \mathbf{v}_3=\left[\begin{array}{c} \alpha \\ \alpha^6 \\ \alpha \end{array}\right]. \]
We need to find $\mathbf{x},\mathbf{y}_1,\mathbf{y}_2,\mathbf{y}_3$ as in the definition. Take $\mathbf{x}=\mathbf{e}_2$, $\mathbf{y}_1=\mathbf{e}_1$, $\mathbf{y}_2=\mathbf{e}_3$, and $\mathbf{y}_3=\mathbf{e}_1+\mathbf{e}_3$. It is clear that the vectors $\mathbf{v}_1, \mathbf{v}_2,\mathbf{v}_3$ have rank support $\langle\mathbf{e}_2,\mathbf{e}_1\rangle$, $\langle\mathbf{e}_2,\mathbf{e}_3\rangle$ and $\langle\mathbf{e}_2,\mathbf{e}_1+\mathbf{e}_3\rangle$, respectively.

Now we have to consider all linear combinations $\lambda_1\mathbf{v}_1+\lambda_2\mathbf{v}_2+\lambda_3\mathbf{v}_3$ with $\lambda_i\in\mathbb{F}_2$. It is straightforward to check that $\lambda_1\mathbf{v}_1+\lambda_2\mathbf{v}_2+\lambda_3\mathbf{v}_3=\mathbf{0}$ if and only if $\lambda_1=\lambda_2=\lambda_3$, and this is the same for the linear combination $\lambda_1\mathbf{y}_1+\lambda_2\mathbf{y}_2+\lambda_3\mathbf{y}_3=\mathbf{0}$. Assume $\lambda_1\mathbf{v}_1+\lambda_2\mathbf{v}_2+\lambda_3\mathbf{v}_3\neq\mathbf{0}$. If only one of the $\lambda$'s is nonzero, it is clear that $\lambda_1\mathbf{v}_1+\lambda_2\mathbf{v}_2+\lambda_3\mathbf{v}_3=\mathbf{v}_i$ is a representation of $\langle\mathbf{x},\mathbf{y}_i\rangle$. Assume now two out of three $\lambda$'s are nonzero, for example $\lambda_1=\lambda_3=1$ (the other cases go similarly). Then $\lambda_1\mathbf{v}_1+\lambda_3\mathbf{v}_3=\mathbf{v}_2$ and also $\lambda_1\mathbf{y}_1+\lambda_3\mathbf{y}_3=\mathbf{y}_2$. We have seen that $\mathbf{v}_2$ is a representation of $\langle\mathbf{x},\mathbf{y}_2\rangle$. We conclude that $\mathbf{v}_1,\mathbf{v}_2,\mathbf{v}_3$ is a compatible set of representations.

Now suppose we would replace $\mathbf{v}_2$ by $\alpha\mathbf{v}_2$. Note that $\Rsupp(\alpha\mathbf{v}_2)=\Rsupp(\mathbf{v}_2)$ so we are still considering the same edges. Take $\lambda_1=\lambda_2=1$ and $\lambda_3=0$. Then we get $\mathbf{v}=\left[ \begin{array}{ccc} \alpha & \alpha & \alpha^2 \end{array}\right]^\mathsf{T}$. The rank support of this vector is $\langle \mathbf{e}_1+\mathbf{e}_2,\mathbf{e}_3\rangle$. This space does not even contain $\mathbf{e}_2$, which is the common vertex of the edges represented by $\mathbf{v}_1$, $\alpha\mathbf{v}_2$ and $\mathbf{v}_3$. Hence $\mathbf{v}_1,\alpha\mathbf{v}_2,\mathbf{v}_3$ is not a compatible set of representations.
\end{Example}

However, we can multiply compatible representations by an element of $\mathbb{F}_{q^m}^*$, as long as we multiply all representations by the same scalar.

\begin{Lemma}\label{MultByBigScalar}
Suppose $\mathbf{v}_1,\ldots,\mathbf{v}_s$ is a compatible set of representations w.r.t. some fixed full-weight vector $\mathbf{u}$ of edges adjacent to the same vertex. Then for any $\mu\in\mathbb{F}_{q^m}^*$ the set $\mu\mathbf{v}_1,\ldots,\mu\mathbf{v}_s$ is again a compatible set of representations, but with respect to the full-weight vector $\mu\mathbf{u}$.
\end{Lemma}
\begin{proof}
First of all, note that $\mu\mathbf{u}$ is indeed a full-weight vector. We will show that the choice of basis vectors $\mathbf{x},\mathbf{y}_1,\ldots,\mathbf{y}_s$ satisfies the properties given in Definition \ref{def:compatib}, but with respect to $\mu\mathbf{u}$ instead of $\mathbf{u}$. For $i=1,\ldots,s$ we have that 
\[ \Rsupp(\mu\mathbf{v}_i)=\Rsupp(\mathbf{v}_i)=\langle\mathbf{x},\mathbf{y}_i\rangle \]
and $\mu\mathbf{v}_i\cdot\mu\mathbf{u}=0$, so $\mu\mathbf{v}_i$ and $\mathbf{v}_i$ are a representation of the same edge with respect to, respectively, $\mu\mathbf{u}$ and $\mathbf{u}$.

Now since $\mathbf{v}_1,\ldots,\mathbf{v}_s$ is a compatible representation, we have for any $\lambda_1,\ldots,\lambda_s\in\mathbb{F}_q$ that $\lambda_1\mathbf{v}_1+\cdots+\lambda_s\mathbf{v}_s=\mathbf{0}$ if and only if $\lambda_1\mathbf{y}_1+\cdots+\lambda_s\mathbf{y}_s=\mathbf{0}$. We have left to show that if the vector $\lambda_1(\mu\mathbf{v}_1)+\cdots+\lambda_s(\mu\mathbf{v}_s)$ is nonzero, it is a representation with respect to $\mu\mathbf{u}$ of the edge $\langle\mathbf{x},\lambda_1\mathbf{y}_1+\cdots+\lambda_s\mathbf{y}_s\rangle$. This follows again directly from the fact that $\mathbf{v}_1,\ldots,\mathbf{v}_s$ is a compatible representation with respect to $\mathbf{u}$: by Proposition \ref{UniqueUpToAlpha}, $\lambda_1(\mu\mathbf{v}_1)+\cdots+\lambda_s(\mu\mathbf{v}_s)=\mu(\lambda_1\mathbf{v}_1+\cdots+\lambda_s\mathbf{v}_s)$ has the same rank support as $\lambda_1\mathbf{v}_1+\cdots+\lambda_s\mathbf{v}_s$, which is the edge $\langle\mathbf{x},\lambda_1\mathbf{y}_1+\cdots+\lambda_s\mathbf{y}_s\rangle$. Finally, we note that $\mathbf{u}\cdot(\lambda_1(\mu\mathbf{v}_1)+\cdots+\lambda_s(\mu\mathbf{v}_s))=0=\mu\mathbf{u}\cdot(\lambda_1 \mathbf{v}_1+\cdots+\lambda_s\mathbf{v}_s)$.

We conclude that $\mu\mathbf{v}_1,\ldots,\mu\mathbf{v}_s$ is a compatible set of representations with respect to $\mu\mathbf{u}$.
\end{proof}

We have seen so far that if given a compatible representation, we can find a basis for the edges. But this is not a very constructive definition: how do we find a compatible representation? This question is answered in the next theorem.

\begin{Theorem}\label{THE_repres}
Consider $s$ edges through the same vertex $X=\langle\mathbf{x}\rangle$. Let $N$ be the subspace spanned by all vertices on these edges. Let $\mathbf{u}$ be a fixed full-weight vector in $\mathbb{F}_{q^m}^m$. By Lemma \ref{dDim}, we can assume there exist $\mathbf{y}_1,\ldots,\mathbf{y}_s$ in a a codimension-one space of $N$ not containing $X$ such that these edges can be written as $\langle\mathbf{x},\mathbf{y}_1\rangle,\ldots,\langle\mathbf{x},\mathbf{y}_s\rangle$.
Let $\mathbf{v}_i=-(\mathbf{u}\cdot\mathbf{y}_i)\mathbf{x}+(\mathbf{u}\cdot\mathbf{x})\mathbf{y}_i$. Then $\mathbf{v}_1,\ldots,\mathbf{v}_s$ is a compatible representation with respect to $\mathbf{u}$ for the edges $\langle\mathbf{x},\mathbf{y}_1\rangle,\ldots,\langle\mathbf{x},\mathbf{y}_s\rangle$.
\end{Theorem}
\begin{proof}
First of all, note that $\mathbf{v}_i$ cannot be zero. Since the entries of $\mathbf{u}$ are elements of $\mathbb{F}_{q^m}$ that are linearly independent over $\mathbb{F}_q$, the dot product of $\mathbf{u}$ with any nonzero vector in $\mathbb{F}_q^m$ will be nonzero. Combining this with the fact that $\mathbf{x}$ and $\mathbf{y}_i$ are linearly independent over $\mathbb{F}_q^m$ hence over $\mathbb{F}_{q^m}^m$, we find that $\mathbf{v}_i$ is nonzero.

We will prove that $\mathbf{v}_i$ is indeed a representation for $\langle\mathbf{x},\mathbf{y}_i\rangle$, that is, $\Rsupp(\mathbf{v}_i)=\langle\mathbf{x},\mathbf{y}_i\rangle$ and $\mathbf{v}_i\cdot\mathbf{u}=0$. The latter is straightforward:
\begin{align*}
\mathbf{v}_i\cdot\mathbf{u} & = \left(-(\mathbf{u}\cdot\mathbf{y}_i)\mathbf{x}+(\mathbf{u}\cdot\mathbf{x})\mathbf{y}_i\right)\cdot\mathbf{u} \\
 & = -(\mathbf{u}\cdot\mathbf{y}_i)(\mathbf{x}\cdot\mathbf{u})+(\mathbf{u}\cdot\mathbf{x})(\mathbf{y}_i\cdot\mathbf{u}) \\
 & = 0.
\end{align*}
Now for any vector in $\mathbb{F}_q^m$, we have that its rank support is equal to its span. Therefore, $\langle\mathbf{x},\mathbf{y}_i\rangle=\Rsupp(\mathbf{x})+\Rsupp(\mathbf{y}_i)$. Then, by our favourite lemma about rank support, we have that
\[ \Rsupp\left(-(\mathbf{u}\cdot\mathbf{y}_i)\mathbf{x}+(\mathbf{u}\cdot\mathbf{x})\mathbf{y}_i\right)\subseteq\Rsupp(\mathbf{x})+\Rsupp(\mathbf{y}_i). \]
To prove equality, we show that the left hand side has dimension 2. It is clearly at most $2$, and it is not $0$ since $\mathbf{v}_i\neq\mathbf{0}$, so it suffices to prove the dimension is not $1$. A vector can only have one-dimensional rank support if it is a $\mathbb{F}_{q^m}$-multiple of a vector in $\mathbb{F}_q^m$.

Suppose, towards a contradiction, that there is an element $\bar\alpha \in \FF_{q^m}$ such that $\bar\alpha {\bf v}_i \in \FF_q^m$.
This can happen in two cases: either both $\bar\alpha ({\bf u} \cdot {\bf x})$ and 
$\bar\alpha ({\bf u} \cdot {\bf y}_i)$ are in $\FF_q$ or there are components in $\FF_{q^m}$ in both
$\bar\alpha ({\bf u} \cdot {\bf x}){\bf y}_i$ and 
$\bar\alpha ({\bf u} \cdot {\bf y}_i){\bf x}$ that turn out to cancel.

In the first case, $\bar\alpha ({\bf u} \cdot {\bf x}) \in \FF_q$ and 
$\bar\alpha ({\bf u} \cdot {\bf y}_i) \in \FF_q$ means that $\bar\alpha ({\bf u} \cdot {\bf x})$ and $\bar\alpha ({\bf u} \cdot {\bf y}_i)$ are $\mathbb{F}_q$-multiples of each other, hence linearly dependent over $\mathbb{F}_q$. But then also ${\bf u} \cdot {\bf x}$ and ${\bf u} \cdot {\bf y}_i$ are dependent over $\mathbb{F}_q$. This is impossible, because ${\bf x}$ and ${\bf y}_i$ are linearly independent over $\mathbb{F}_q$ (see Remark \ref{AlgebraicallyIndependent}).

On the other side, in order to have cancellation we would need to have it component by component and this would again make ${\bf x}, {\bf y}_i$ linearly dependent. We conclude that $\mathbf{v}_i$ is indeed a representation for $\langle\mathbf{x},\mathbf{y}_i\rangle$.

We now show that $\mathbf{v}_1,\ldots,\mathbf{v}_s$ is a set of compatible representations. Take the vectors $\mathbf{x},\mathbf{y}_q,\ldots,\mathbf{y}_s$ as above, we show that they satisfy the definition of compatible representations. The first two properties are clear. For the third one, we write
\begin{align*}
\lefteqn{\lambda_1\mathbf{v}_1+\cdots+\lambda_s\mathbf{v}_s} \\
 & =\lambda_1(-(\mathbf{u}\cdot\mathbf{y}_1)\mathbf{x}+(\mathbf{u}\cdot\mathbf{x})\mathbf{y}_1)+\cdots+\lambda_s(-(\mathbf{u}\cdot\mathbf{y}_s)\mathbf{x}+(\mathbf{u}\cdot\mathbf{x})\mathbf{y}_s) \\
 & = -(\mathbf{u}\cdot(\lambda_1\mathbf{y}_1+\cdots+\lambda_s\mathbf{y}_s))\mathbf{x}+(\mathbf{u}\cdot\mathbf{x})(\lambda_1\mathbf{y}_1+\cdots+\lambda_s\mathbf{y}_s)
\end{align*}
Since $\mathbf{x}$ and $\lambda_1\mathbf{y}_1+\cdots+\lambda_s\mathbf{y}_s$ are linearly independent, by similar reasoning as before we see that $\lambda_1\mathbf{y}_1+\cdots+\lambda_s\mathbf{y}_s=\mathbf{0}$ iff $\lambda_1\mathbf{v}_1+\cdots+\lambda_s\mathbf{v}_s=\mathbf{0}$. Moreover, the last right hand side is exactly what we get when substituting $\lambda_1\mathbf{y}_1+\cdots+\lambda_s\mathbf{y}_s$ in the formula from the theorem. This implies $\mathbf{v}_1,\ldots,\mathbf{v}_s$ is indeed a set of compatible representations.
\end{proof}

This theorem only gives us a compatible set of representations for (all) edges adjacent to the same vertex. By repeating the procedure for all vertices, one edge can get different representations. The next Lemma \ref{ChangeBasis} shows that these representations will only differ up to a scalar of $\mathbb{F}_q^*$.

\begin{Lemma}\label{ChangeBasis}
Let $\langle \mathbf{x},\mathbf{y} \rangle$ be an edge. Then taking $\mathbf{x}$ and $\mathbf{y}$ as a basis, or taking $a_1 \mathbf{x} + b_1 \mathbf{y}$, and $a_2 \mathbf{x} + b_2 \mathbf{y}$ as a basis, gives representations w.r.t $\mathbf{u}$ that differ only by a scalar in $\mathbb{F}_q^*$.
\end{Lemma}
\begin{proof}
Fixing $\mathbf{u}$ and applying Theorem \ref{THE_repres} to the basis $\langle \mathbf{x},\mathbf{y} \rangle$ gives the representation $-(\mathbf{u}\cdot\mathbf{y})\mathbf{x}+(\mathbf{u}\cdot\mathbf{x})\mathbf{y}$. Applying the same Theorem to the basis $\langle a_1 \mathbf{x} + b_1 \mathbf{y}, a_2 \mathbf{x} + b_2 \mathbf{y} \rangle$ gives a representation that we can rewrite.
\begin{align*}
&-(\mathbf{u}\cdot(a_2 \mathbf{x} + b_2 \mathbf{y}))
(a_1\mathbf{x}+b_1\mathbf{y})
+
(\mathbf{u}\cdot(a_1 \mathbf{x} + b_1 \mathbf{y}))
(a_2\mathbf{x}+b_2\mathbf{y})= \\ &
-(a_2(\mathbf{u}\cdot\mathbf{x})+b_2(\mathbf{u}\cdot\mathbf{y})) (a_1\mathbf{x}+b_1\mathbf{y})+
(a_1(\mathbf{u}\cdot\mathbf{x})+b_1(\mathbf{u}\cdot\mathbf{y})) (a_2\mathbf{x}+b_2\mathbf{y})= \\ & 
(a_1b_2-a_2b_1) (-(\mathbf{u}\cdot\mathbf{y})\mathbf{x}+(\mathbf{u}\cdot\mathbf{x})\mathbf{y}).
\end{align*}
Note that $a_1b_2-a_2b_1$ is the determinant of the coordinate change between $\langle \mathbf{x},\mathbf{y} \rangle$ and $\langle a_1 \mathbf{x} + b_1 \mathbf{y}, a_2 \mathbf{x} + b_2 \mathbf{y} \rangle$, hence it is nonzero. Since $a_i,b_i \in \mathbb{F}_q$, $i=1,2$, the Lemma holds true.
\end{proof}

From this results it follows that we can use Theorem \ref{THE_repres} also to find a set of compatible representations for $q$-ary (sub)graphs that are not connected. Now we are ready to define the incidence matrix of a $q$-ary graph.

\begin{Definition}
The incidence matrix (with respect to a fixed full-weight vector $\mathbf{u}$) of a $q$-ary graph is a matrix with compatible representations (with respect to $\mathbf{u}$) as columns.
\end{Definition}

Note that this definition depends on the choice of $\mathbf{u}$. We have, however, the following (see Lemma \ref{MultBySmallScalar}):

\begin{Corollary}\label{FixReprv1FixAll}
The incidence matrix is defined up to ordering of the columns and multiplication of a column by an element of $\mathbb{F}_q^*$.
\end{Corollary}

% Again, this is a good place for examples.

\section{From an incidence matrix to a $q$-matroid}

We will prove that picking a different incidence matrix for a $q$-ary graph gives an isomorphic $q$-matroid. First, we state and prove the following result. It is already implicit in \cite[Lemma 21]{JP18}, but we prove it here for completeness.

\begin{Lemma}\label{RowAndColOperations}
Let $M$ be a $q$-matroid over $E=\mathbb{F}_q^n$ of rank $k$, represented by the matrix $G$ over $\mathbb{F}_{q^m}$. Then applying row operations over $\mathbb{F}_{q^m}$ to $G$ does not change $M$. Applying column operations over $\mathbb{F}_q$ to $G$ gives an isomorphic $q$-matroid $M'$.
\end{Lemma}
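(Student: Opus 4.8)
The plan is to translate each type of operation into a statement about the rank function $r(A)=\rk(GY^\mathsf{T})$, after first recording why that function is well-defined. If $Y$ and $Y'$ both have row space $A$, they differ by left multiplication by an invertible matrix over $\mathbb{F}_q$, so $GY'^\mathsf{T}$ and $GY^\mathsf{T}$ differ by an invertible right factor and hence have equal rank; thus $r(A)$ does not depend on the chosen $Y$. This independence is exactly what licenses the manipulations below, so I would state it explicitly at the start.

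For the row operations, I would write the operation as $G'=PG$ with $P$ invertible over $\mathbb{F}_{q^m}$. For any subspace $A$ with row-space matrix $Y$ we then have $G'Y^\mathsf{T}=P(GY^\mathsf{T})$, and left multiplication by an invertible matrix preserves rank, so $r'(A)=\rk(G'Y^\mathsf{T})=\rk(GY^\mathsf{T})=r(A)$ for every $A$. Hence the two rank functions coincide and $M'=M$ exactly; this half is immediate.

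For the column operations, I would write the operation as $G'=GQ$ with $Q\in GL(n,\mathbb{F}_q)$. The key move is to absorb $Q$ into the representing matrix: compute $G'Y^\mathsf{T}=GQY^\mathsf{T}=G(YQ^\mathsf{T})^\mathsf{T}$, and note that $YQ^\mathsf{T}$ has row space $\phi(A):=\{\mathbf{y}Q^\mathsf{T}:\mathbf{y}\in A\}$. Since $Q$ is invertible over $\mathbb{F}_q$, the map $\phi\colon E\to E$ is an $\mathbb{F}_q$-linear bijection preserving dimension, and $YQ^\mathsf{T}$ is a legitimate row-space matrix for $\phi(A)$; by the well-definedness step I may use it to compute $r$. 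Therefore $r'(A)=\rk(G(YQ^\mathsf{T})^\mathsf{T})=r(\phi(A))$ for all $A$, which says precisely that $\phi$ is an isomorphism of $q$-matroids between $M'$ and $M$, so $M'\cong M$.

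The only real obstacle is the bookkeeping with the transpose in the column case: one must check that right multiplication by $Q$ over the small field is what implements an honest change of basis $\phi$ on the ground space $E=\mathbb{F}_q^n$, and that $\phi$ is $\mathbb{F}_q$-linear and bijective. This is where working over $\mathbb{F}_q$ rather than $\mathbb{F}_{q^m}$ is essential, since $\phi$ has to act on $E$; a column operation over the big field would not induce a map of $E$ to itself, which is why only the small-field operations yield a $q$-matroid isomorphism. Beyond this, everything reduces to the elementary fact that multiplying by an invertible matrix on either side leaves the rank unchanged.
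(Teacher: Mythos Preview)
Your proposal is correct and follows essentially the same approach as the paper: both arguments encode row operations as left multiplication by an invertible matrix over $\mathbb{F}_{q^m}$ (preserving $\rk(GY^\mathsf{T})$ identically) and column operations as right multiplication by an invertible matrix over $\mathbb{F}_q$, then absorb that factor into $Y^\mathsf{T}$ to exhibit the linear automorphism of $E$ realizing the isomorphism. Your additional remarks on the well-definedness of $r(A)$ and on why the column operations must be over the small field are useful clarifications that the paper leaves implicit, but the core argument is the same.
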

\begin{proof}
Applying row operations over $\mathbb{F}_{q^m}$ to $G$ means we multiply $G$ from the left with an invertible $k\times k$ matrix $A$ over $\mathbb{F}_{q^m}.$ For any $X\subseteq E$ of dimension $t$, we have that $r(X)=\rk(GY^\mathsf{T})$, where $Y$ is a $t\times n$ matrix with row space $X$. Since $\rk(GY^\mathsf{T})=\rk((AG)Y^\mathsf{T})$, we see that $G$ and $AG$ represent the same $q$-matroid. \\

When we apply column operations to $G$, it means we multiply from the right with an invertible $n\times n$ matrix $B$ over $\mathbb{F}_q$. Again for any subspace $X\subseteq E$ we have that
\[ r_{M(G)}(X)=\rk(GY^\mathsf{T})=\rk((GB)(B^{-1}Y^\mathsf{T}))=r_{M(GB)}(\varphi(X)) \]
where $\varphi$ is the isomorphism of $\mathbb{F}_q^n$ defined by multiplication with $B^{-1}$.
\end{proof}

We apply this result to show that the choice of incidence matrix of a $q$-ary graph changes the associated $q$-matroid at most up to equivalence.

\begin{Theorem}\label{DifferentIncSameMat}
    Associate to a $q$-ary graph an incidence matrix $G$. Let $M=M(G)$ be the $q$-matroid represented by $G$. Let $G'$ be another incidence matrix for the same $q$-ary graph. Then $M(G)$ and $M(G')$ are isomorphic $q$-matroids.
\end{Theorem}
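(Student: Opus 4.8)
The plan is to realise the passage from $G$ to $G'$ as a composition of the two moves analysed in Lemma \ref{RowAndColOperations}: a single row operation over $\mathbb{F}_{q^m}$ followed by a column operation over $\mathbb{F}_q$. Since both incidence matrices have one column per edge of the \emph{same} graph, I may assume (after an initial column permutation over $\mathbb{F}_q$, which only permutes the ground set and is itself such a column operation) that the columns of $G$ and $G'$ are indexed by the edges in the same order. The entire freedom in building an incidence matrix is then controlled by our earlier results: the representation of any single edge is pinned down only up to an $\mathbb{F}_{q^m}^*$-scalar by Proposition \ref{UniqueUpToAlpha}, while Corollary \ref{FixReprv1FixAll} says that once the representation of one edge is fixed, every other column is forced up to a scalar in the small field $\mathbb{F}_q^*$.

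Concretely, I would fix one edge $e_1$ and compare the two columns representing it. Writing $\mathbf{v}_1,\mathbf{v}_1'$ for the $e_1$-columns of $G,G'$, Proposition \ref{UniqueUpToAlpha} gives a $\beta\in\mathbb{F}_{q^m}^*$ with $\mathbf{v}_1'=\beta\mathbf{v}_1$. First I would replace $G$ by $\beta G=(\beta I)G$. This is left multiplication by the invertible scalar matrix $\beta I$ over $\mathbb{F}_{q^m}$, so by Lemma \ref{RowAndColOperations} it does not change the $q$-matroid; moreover $\beta G$ is again a genuine incidence matrix of the graph in the sense of Definition \ref{IncidMatrPrecise} (each column stays orthogonal to $\mathbf{u}$, and scaling by a field element preserves rank-support), and its $e_1$-column is now exactly $\mathbf{v}_1'$.

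Now $\beta G$ and $G'$ are two incidence matrices of the same graph that agree on the column of $e_1$. By Corollary \ref{FixReprv1FixAll}, fixing this one representation determines the representation of every other edge up to a factor in $\mathbb{F}_q^*$; hence for each edge $e_i$ the corresponding columns of $\beta G$ and of $G'$ differ by some scalar $c_i\in\mathbb{F}_q^*$ (with $c_1=1$). Collecting these scalars into a diagonal matrix $D$ over $\mathbb{F}_q$, one entry per edge, I obtain $G'=(\beta G)D$. Since $D$ is invertible over $\mathbb{F}_q$, the second half of Lemma \ref{RowAndColOperations} yields $M(G')=M((\beta G)D)\cong M(\beta G)=M(G)$, which is the claim.

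The substantive point — and the place where all the earlier work is really used — is the assertion that the per-column scalars $c_i$ lie in $\mathbb{F}_q^*$ rather than merely in $\mathbb{F}_{q^m}^*$. A priori Proposition \ref{UniqueUpToAlpha} only bounds each column up to an $\mathbb{F}_{q^m}^*$-scalar, and if those scalars were genuinely independent across columns the relation $G'=(\beta G)D$ would hold only with $D$ over the large field, a case Lemma \ref{RowAndColOperations} does not cover. It is precisely the $q$-graph property, propagated along the graph through Theorems \ref{NoNeedToCareOfMultipl} and \ref{Triangle} and summarised in Corollary \ref{FixReprv1FixAll}, that forces these scalars down into $\mathbb{F}_q^*$ and thereby makes the reduction to an $\mathbb{F}_q$-column operation legitimate. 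The only care needed in writing this up is to confirm that $\beta G$ still meets Definition \ref{IncidMatrPrecise} so that Corollary \ref{FixReprv1FixAll} may be applied to it, and to record that the possible initial reordering of columns is absorbed into the $\mathbb{F}_q$-column operations.
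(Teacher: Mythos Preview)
Your proof is correct and follows the same route as the paper: reduce the ambiguity in the incidence matrix to (i) a column permutation, (ii) a global $\mathbb{F}_{q^m}^*$-scalar coming from Proposition \ref{UniqueUpToAlpha} on one chosen edge, and (iii) per-column $\mathbb{F}_q^*$-scalars via Corollary \ref{FixReprv1FixAll}, then absorb (ii) as a row operation and (i), (iii) as $\mathbb{F}_q$-column operations through Lemma \ref{RowAndColOperations}; the paper argues the same three points, just as a case list rather than as your single factorisation $G'=(\beta I)\,G\,D$. The one item the paper adds is the possibility of picking a different $\mathbb{F}_q$-basis of $\mathbb{F}_{q^m}$ (hence a different $\mathbf{u}$), which it treats as an additional left multiplication over $\mathbb{F}_q$; you implicitly keep $\mathbf{u}$ fixed, which is consistent with how Definition \ref{IncidMatrPrecise} is stated.
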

\begin{proof}
The matrices $G$ and $G'$ can differ in several ways. We will argue that all of them lead to isomorphic $q$-matroids. By Lemma \ref{RowAndColOperations}, for a representable $q$-matroid, we can do row operations over $\mathbb{F}_{q^m}$ without changing the $q$-matroid. Column operations over $\mathbb{F}_q$ will lead to an isomorphic $q$-matroid. \\
Suppose $G$ and $G'$ are defined with respect to the same full-rank vector $\mathbf{u}$. Then we have from Corollary \ref{FixReprv1FixAll} that the incidence matrix is defined up to multiplying columns by an element of $\mathbb{F}_q^*$. This can be viewed as a column operation over $\mathbb{F}_q$ hence will yield an isomorphic $q$-matroid. Next, we note that the order in which the representations of edges appear in the incidence matrix, leads to isomorphic $q$-matroids, because re-ordering is a column operation over $\mathbb{F}_q$. \\
Now suppose $G$ and $G'$ are defined with respect to different full-wight vectors $\mathbf{u}$ and $\mathbf{u}'$, respectively. This can be viewed as a change of basis of the field extension $\mathbb{F}_{q^m}/\mathbb{F}_q$. Let $C$ be the full-rank $m\times m$ matrix over $\mathbb{F}_q$ such that $\mathbf{u}'=C\mathbf{u}$. If $C$ is the matrix of multiplication by some scalar $\mu\in\mathbb{F}_{q^m}^*$, so $\mathbf{u}'=\mu\mathbf{u}$, Lemma \ref{MultByBigScalar} gives that the columns of $G'$ are equal to the corresponding columns of $G$, multiplied with $\mu$. This can be seen as a row operation over $\mathbb{F}_{q^m}$, hence does not change the associated $q$-matroid. If $C$ is some other matrix, we can replace $\mu$ by $C$ everywhere in Lemma \ref{MultByBigScalar}, finding that $G'=CG$. Hence $G$ and $G'$ differ by left multiplication of a matrix over $\mathbb{F}_q$, thus over $\mathbb{F}_{q^m}$, and give the same $q$-matroid.
\end{proof}

\section{Concluding remarks}\label{Conclusion}

In this preprint we have given the definition of a $q$-ary graph. We have shown that we can associate an incidence matrix, and hence a representation of a $q$-matroid, to it. We have motivated why this definition of the incidence matrix is a $q$-analogue of the incidence matrix in the classical case. Further motivation can be found in viewing graphs as $q$-ary graphs over the field $\mathbb{F}_1$, but we will not go further into that here.

We realise that some of our proofs require the introduction of coordinates, and later proving that the results are independent of this choice of coordinates. This suggest that there is a ``bigger picture'' behind our algebraic approach, related to $q$-systems and linear sets.

The representation of an edge of a $q$-ary graph as a column vector, as defined in this paper, can be viewed as the $q$-analogue of an incidence vector of a set. This idea might be extended to other $q$-analogues, for example in defining the $q$-analogue of a matroid polytope. However, this needs more research and insights in the geometric picture behind our approach.

A big open question that we do not yet know how to answer, is how to go from a $q$-ary graph directly to a $q$-matroid. This requires to study the $q$-analogue of a set of edges. In the $q$-analogue, this should be a space. We hope to determine if it is a circuit in the $q$-matroid by looking at whether the corresponding edges, whatever that means, form a cycle in the $q$-ary graph. Also here we feel more research is needed.

\section*{Acknowledgements}
M. Ceria belongs to  the ``National
Group for Algebraic and Geometric Structures, and their Applications'' (GNSAGA - INdAM), which supported this work.
This work was supported as well by the Italian Ministry of University and Research under the Programme “Department of Excellence” Legge 232/2016 (Grant No. CUP - D93C23000100001).

\bibliographystyle{abbrv}
\bibliography{biblio} 
\end{document}